\newcommand*{\scrF}{\ensuremath{\mathscr{F}}} 
\newcommand*{\caC}{\ensuremath{\mathcal{C}}}	
\newcommand*{\caF}{\ensuremath{\mathcal{F}}}	
\newcommand*{\caN}{\ensuremath{\mathcal{N}}}	
\newcommand*{\caS}{\ensuremath{\mathcal{S}}}	
\newcommand*{\N}{\mathbb{N}}									
\newcommand*{\R}{\mathbb{R}}									
\newcommand*{\Rd}{{\mathbb{R}^d}}							
\newcommand*{\eps}{\varepsilon}								
\newcommand*{\E}{\mathbb{E}}									
\renewcommand*{\P}{\mathbb{P}}								
\numberwithin{equation}{section}
\newtheorem{lemma}{Lemma}[section]
\newtheorem{theorem}[lemma]{Theorem}
\newtheorem{remark}[lemma]{Remark}
\begin{document}

\begin{titlepage}
\vskip 1cm

\begin{center}
{\Large\bf Absolute continuity for SPDEs with irregular fundamental solution}
\medskip

by\\
\vspace{14mm}

\begin{tabular}{l@{\hspace{10mm}}l@{\hspace{10mm}}l}
{\sc Marta Sanz-Sol\'e}$\,^{(\ast)}$ & and &{\sc Andr\'e S{\"u}\ss}$\,^{(\ast)}$\\
{\small marta.sanz@ub.edu}        & &{\small andre.suess@ub.edu}\\
{\small http://www.ub.edu/plie/Sanz-Sole}\\
\end{tabular}
\begin{center}
{\small Facultat de Matem\`atiques}\\
{\small Universitat de Barcelona } \\
{\small Gran Via de les Corts Catalanes, 585} \\
{\small E-08007 Barcelona, Spain} \\
\end{center}

\vskip 1cm

\end{center}

\vskip 2cm

\noindent{\bf Abstract.} 
For the class of stochastic partial differential equations studied in \cite{conusdalang}, we prove the existence of density of the 
probability law of the solution at a given point $(t,x)$, and that the density belongs to some Besov space. The proof relies on the method developed in \cite{debusscheromito}.
The result can be applied to the solution of the stochastic wave equation with multiplicative noise, Lipschitz coefficients and  any spatial dimension $d\ge 1$, and also to the heat equation.
This provides an extension of the results proved in \cite{sanzsuess1}.

 \medskip

\noindent{\bf Keywords:} Stochastic partial differential equations, stochastic wave equation, densities.
\smallskip

\noindent{\bf AMS Subject Classification:} Primary 60H15, 60H07; Secondary 60H20, 60H05.

\vskip 2cm

\noindent
\footnotesize
{\begin{itemize} \item[$^{(\ast)}$] Supported by the grant MTM 2012-31192 from the \textit{Direcci\'on General de
Investigaci\'on Cient\'{\i}fica y T\'ecnica, Ministerio de Econom\'{\i}a y Competitividad, Spain.}
\end{itemize}}

\end{titlepage}
\newpage

\section {Introduction}
The seminal article \cite{malliavin1} begins with a criterion for the absolute continuity with respect to the Lebesgue measure for nonnegative finite measures on $\R^m$. Using tools of harmonic analysis, it is proved that if $\kappa$ is such a measure and there exist a constant $c$ such that for every $\Phi$ with compact support, we have $\left\vert\int \partial_k \Phi d \kappa\right\vert \le c\Vert \Phi\Vert_{\infty}$, for $1\le k\le m$, then $\kappa(dx)= k(x)dx$ and $k\in L^1$. By iteration, it is possible to strengthen this criterion and obtain the existence of an infinitely differentiable density. In the same article, Malliavin sets up the grounds of a stochastic calculus of variation with the purpose to be able to apply this criterion to the probability law of Gaussian functionals. It was coined as {\it Malliavin Calculus}. The book \cite{malliavin2} contains an extensive list of references on applications of this calculus which is still continuing to grow. Among them, the study of the law of random field solutions to stochastic partial differential equations, in the sequel referred to as SPDEs (see \cite{sanzbook} for an introduction). 

The use of Malliavin calculus for the analysis of densities requires some {\it regularity} properties that are not met by all SPDEs and neither by stochastic differential equations with non-smooth coefficients. This problem has motivated the search for alternatives to Malliavin's criterion, quite similar in spirit, but giving weaker conclusions, applicable to cases that exhibit irregularities. For the sake of brevity, we only mention the references \cite{fournierprintems,debusschefournier,debusscheromito} where the approach used in this paper is developed, and the recent related article \cite{ballycaramellino}.

Throughout the paper we consider the setting of \cite{conusdalang,sanzsuess1}. More explicitly, we deal with an SPDE 
$ Lu(t,x) = b(u(t,x)) + \sigma(u(t,x))\dot{F}(t,x)$, with suitable initial conditions, that we express in its mild formulation as 
\begin{align}\label{eq:SPDE2}
  u(t,x) =	& \int_0^t\int_\Rd \Lambda(t-s,x-y)\sigma(u(s,y))M(ds,dy) \notag\\
						& + \int_0^t\int_\Rd \Lambda(t-s,x-y)b(u(s,y))dyds,
\end{align}
$(t,x)\in[0,T]\times \R^d$. Here $\Lambda$ denotes the fundamental solution to $Lu=0$ and $M$ is the martingale measure derived from a random noise $F$ white in time
and with a stationary covariance measure in space. The spectral measure of the covariance (its inverse Fourier transform) will be denoted by $\mu$.

Let $\{u(t,x), (t,x)\in[0,T]\times \R^d\}$ be the random field solution to \eqref{eq:SPDE2}. Assume that the function $\sigma$ in \eqref{eq:SPDE2} is constant, and fix
$(t,x)\in(0,T]\times \R^d$. In \cite{sanzsuess1}, using Malliavin Calculus, it is proved that the probability law of $u(t,x)$ has a density. The purpose of this article is to extend this result allowing $\sigma$ to be a nonlinear Lipschitz continuous function. Moreover, we prove that the density belongs to
some Besov space.

In \cite{sanzsuess1}, the restriction on $\sigma$ is forced by the method of the proof. Indeed, in the examples where the fundamental solution $\Lambda$ is a nonnegative distribution, $\sigma$ and $b$ are differentiable, and $\sigma$ is bounded away from zero, we can prove that the Malliavin matrix is invertible. However, for more general $\Lambda$, for example the fundamental solution to the wave equation in dimension $d\ge 4$, this does not seem to be feasible, except for constant $\sigma$. In contrast, the method of \cite{debusscheromito}, based on Lemma \ref{lem:existencedensity} of Section \ref{sec:result}, can be successfully applied, and also the regularity of the coefficients $\sigma$ and $b$ can be relaxed.

The paper is structured in the following way. In Section \ref{sec:result} we prove the main result on existence of density, and find the Besov space that contains this density.  In Section \ref{sec:example}, we study the example of stochastic wave equations in any spatial dimension $d\geq 1$, the interesting and novel case being $d\ge 4$ (see \cite{quersanz1}, \cite{quersanz2} for related results). We consider two cases of spectral measures $\mu$: with densities given by a Riesz kernel, and finite measures. We also provide a comment about the stochastic heat equation. The existence of a density for this equation is well-known, see \cite{mcms}; however, with this different approach we can allow for less smooth coefficients.

We end this section by fixing some notation. Throughout the article we write $C$ for any positive constant, which may change from line to line. The set of Schwartz functions on $\Rd$ is denoted by $\caS(\Rd)$, $\caS'_r(\Rd)$ is the set of tempered distributions with rapid decrease, and  $\caF$  the Fourier transform operator on $\Rd$. We denote by $\{ \scrF_t, t\in[0,T]\}$ the filtration generated by the martingale measure $\{M_t, t\in[0,T]\}$.




\section{Statement and proof of the main result}\label{sec:result}
The objective of this section is to prove Theorem \ref{thm:existencedensity}. We begin by introducing a first set of relevant assumptions:
\smallskip

\noindent{\bf (A1)} 
$t\mapsto\Lambda(t)$ is a deterministic function with values in $\mathcal{S}'_r(\Rd)$; the mapping $(t,\xi)\mapsto\caF\Lambda(t)(\xi)$ is measurable and
  \begin{align*}
  &\int_0^T \sup_{\eta\in\Rd} \int_{\Rd}  |\caF\Lambda(s)(\xi+\eta)|^2 \mu(d\xi) ds< \infty,\\
  &\int_0^T \sup_{\eta\in\Rd} |\caF\Lambda(s)(\eta)|^2 ds < \infty. 
  \end{align*}
\noindent{\bf (A2)} Let $\phi$ denote a nonnegative function in $\mathcal{C}^\infty_0(\Rd)$, with support included in the unit ball of $\Rd$, satisfying $\int_\Rd \phi(x)dx=1$. For all such $\phi$ and all $0\leq a\leq b\leq T$, we have
  \[ \int_a^b (\Lambda(s)\ast\phi)(x) ds \in \mathcal{S}(\Rd) \]
  and
  \[ \int_{\Rd}\int_a^b |(\Lambda(s)\ast\phi)(x)| ds dx < \infty. \]
\noindent{\bf (A3)} $t\mapsto\caF\Lambda(t)$ is as in {\bf (A1)}  and
	\begin{align*}
  &\lim_{h\downarrow0} \int_0^T \sup_{\eta\in\Rd}\int_{\Rd}\sup_{s<r<s+h}|\caF\Lambda(r)(\xi+\eta)-\caF\Lambda(s)(\xi+\eta)|^2\mu(d\xi)\ ds = 0,\\
  &\lim_{h\downarrow0} \int_0^T \sup_{\eta\in\Rd}\sup_{s<r<s+h}|\caF\Lambda(r)(\eta)-\caF\Lambda(s)(\eta)|^2  ds = 0. 
  \end{align*}

Under the assumptions {\bf (A1)} and either {\bf (A2)} or {\bf (A3)}, \cite[Theorem 3.1]{conusdalang} assures that the integrals in \eqref{eq:SPDE2} are well-defined and the equation has a unique random field solution. Moreover, for all $t\in[0,T]$, $u(t,x)$ has the same distribution as $u(t,0)$, for all $x\in\Rd$. The solution $\{u(t,x)), (t,x)\in[0,T]\times\Rd\}$ is $L^2$-continuous and has uniformly bounded second moments. We also recall the following estimates:
\begin{align}\label{eq:isometry1}
  \E\bigg[\bigg(\int_0^t\int_\Rd & \Lambda(t-s,x-y)\sigma(u(s,y))M(ds,dy)\bigg)^2\bigg] \notag \\ 
		& \leq \int_0^t \E\big[\sigma(u(s,0))^2\big]\sup_{\eta\in\Rd}\int_\Rd |\caF\Lambda(t-s)(\xi+\eta)|^2\mu(d\xi)ds,
\end{align}
and
\begin{align}\label{eq:isometry2}  
  \E\bigg[\bigg(\int_0^t\int_\Rd & \Lambda(t-s,x-y)b(u(s,y))dyds\bigg)^2\bigg] \notag\\
									& \leq \int_0^t \E\big[b(u(s,0))^2\big]\sup_{\eta\in\Rd} |\caF\Lambda(t-s)(\eta)|^2ds
\end{align}
(see \cite{conusdalang} for the details).

The proof of the existence of density for the law of the solution requires the following second set of assumptions. 
\smallskip

\noindent {\bf (A4)} There exists $C,\delta>0$ such that $\E[(u(t,0)-u(s,0))^2]\leq C|t-s|^\delta$, for all $s,t\in[0,T]$.

\noindent{\bf (A5)} There exists $\sigma_0>0$ such that $\inf_{x\in\R} |\sigma(x)|=\sigma_0$. 

\noindent{\bf (A6)}  There exist positive constants $C,\gamma, \gamma_1, \gamma_2>0$ and $t_0\in(0,T]$ such that 
\begin{align}
Ct^{\gamma} \leq g(t)&:=\int_0^t \int_{\Rd} |\caF\Lambda(s)(\xi)|^2 \mu(d\xi) ds,\ \text{for all }t\in[0,t_0], \label{A8a}\\
 g_1(t)&:=\int_0^t \sup_{\eta\in\Rd} \int_\Rd |\caF \Lambda(s)(\xi+\eta)|^2 \mu(d\xi) d s \leq Ct^{\gamma_1}, \label{A8b}\\
  g_2(t)&:=\int_0^t \sup_{\eta\in\Rd} |\caF \Lambda(s)(\eta)|^2  d s \leq Ct^{\gamma_2}, \label{A8c}\\
  \notag
\end{align}
for all $t\in[0,T].$

The assumption {\bf (A5)} (strong ellipticity) appears frequently when studying the absolute continuity of probability measures induced by solutions to SDEs and SPDEs, while {\bf (A6)} (or similar ones) has been usually required to prove regularity properties of the density.

This is the main result of the paper.

\begin{theorem}\label{thm:existencedensity}
	Fix $(t,x)\in(0,T]\times\Rd$. We assume that the coefficients $\sigma$ and $b$ are Lipschitz continuous functions. Moreover, suppose that {\bf (A1)}, either {\bf (A2)} or {\bf (A3)}, {\bf (A4)}, {\bf (A5)} and {\bf (A6)} hold, and that
	\begin{equation}\label{eq:condition}
	  \bar\gamma := \frac{\min\{\gamma_1,\gamma_2\}+\delta}{\gamma} > 1.
	\end{equation}
	Then, the probability law of $u(t,x)$ is absolutely continuous and its density belongs to all Besov spaces $B_{1,\infty}^s$, with
	\begin{equation}\label{eq:conclusion}
		s < 1 - \bar\gamma^{-1}.
	\end{equation}
\end{theorem}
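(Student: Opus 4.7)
The plan is to verify the Debussche--Romito criterion stated in Lemma~\ref{lem:existencedensity} via a Markov-type splitting of $u(t,x)$ at a scale $\tau\in(0,t_0)$ to be optimized. Write $u(t,x)=F_\tau+G_\tau+H_\tau$, where $F_\tau$ is the $\scrF_{t-\tau}$-measurable part of \eqref{eq:SPDE2} obtained by restricting both integrals to $[0,t-\tau]\times\Rd$; $G_\tau:=\sigma(u(t-\tau,x))\,W_\tau$ with $W_\tau:=\int_{t-\tau}^t\!\int_\Rd \Lambda(t-s,x-y)\,M(ds,dy)$; and $H_\tau:=u(t,x)-F_\tau-G_\tau$ is the remainder. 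By independence of the time increments of $M$ and determinism of $\Lambda$, $W_\tau$ is a centered Gaussian independent of $\scrF_{t-\tau}$ with variance $g(\tau)\ge C\tau^\gamma$ thanks to \textbf{(A6)}; together with \textbf{(A5)}, conditionally on $\scrF_{t-\tau}$ the random variable $G_\tau$ is Gaussian with variance at least $\sigma_0^2 g(\tau)\ge C\sigma_0^2\tau^\gamma$.

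The heart of the proof is the remainder estimate $\E[H_\tau^2]\le C\tau^{\min\{\gamma_1,\gamma_2\}+\delta}$. Split $H_\tau=H_\tau^{\mathrm{st}}+H_\tau^{\mathrm{dr}}$ into the stochastic part, with integrand $\sigma(u(s,y))-\sigma(u(t-\tau,x))$, and the drift part with integrand $b(u(s,y))$, both on $[t-\tau,t]\times\Rd$. The drift part is controlled via \eqref{eq:isometry2}, the Lipschitz bound on $b$ and the uniform $L^2$ bound on $u$, giving $\E[(H_\tau^{\mathrm{dr}})^2]\le C g_2(\tau)\le C\tau^{\gamma_2}$. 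For the stochastic part, one applies the non-stationary variant of \eqref{eq:isometry1}; the Lipschitz property of $\sigma$, combined with \textbf{(A4)} and the spatial stationarity of $u(s,\cdot)$, should yield a $C\tau^\delta$ bound on the relevant second moment factor, leading to $\E[(H_\tau^{\mathrm{st}})^2]\le C\tau^\delta g_1(\tau)\le C\tau^{\gamma_1+\delta}$.

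Inserting this decomposition into Lemma~\ref{lem:existencedensity}, for a test function $\phi$ and $h\in(0,1)$ one expands a $k$-th finite difference $\Delta_h^k\phi(u(t,x))$ in $H_\tau$ by Taylor's formula and then uses the conditional Gaussian density of $G_\tau$ given $\scrF_{t-\tau}$ as a smoothing kernel. Each Gaussian integration by parts converts a derivative of $\phi$ into a factor $\tau^{-\gamma/2}$, while the Taylor remainder produces a factor $(\E[H_\tau^2])^{1/2}\le C\tau^{(\min\{\gamma_1,\gamma_2\}+\delta)/2}$. Balancing the two contributions by taking $\tau$ to be a suitable power of $|h|$ yields $|\E[\Delta_h^k\phi(u(t,x))]|\le C|h|^{ks}\|\phi\|_\infty$ for every $s<1-\gamma/(\min\{\gamma_1,\gamma_2\}+\delta)=1-\bar\gamma^{-1}$, which is exactly \eqref{eq:conclusion}; assumption \eqref{eq:condition} is precisely what guarantees $s>0$.

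The main obstacle is the $\tau^\delta$ improvement in the bound on $H_\tau^{\mathrm{st}}$. A naive use of \eqref{eq:isometry1} forces control of $\sup_y \E[(u(s,y)-u(t-\tau,x))^2]$, which by stationarity equals $\sup_z \E[(u(s,z)-u(t-\tau,0))^2]$ and does not vanish as $\tau\downarrow 0$ for $|z|$ large. The $\tau^\delta$ gain must therefore come from the splitting $\sigma(u(s,y))-\sigma(u(t-\tau,x))=[\sigma(u(s,y))-\sigma(u(t-\tau,y))]+[\sigma(u(t-\tau,y))-\sigma(u(t-\tau,x))]$: the first difference satisfies the $C\tau^\delta$ bound uniformly in $y$ via \textbf{(A4)} and stationarity, while the second is $\scrF_{t-\tau}$-measurable in $y$ and must be absorbed into an enlarged conditionally Gaussian component whose variance is then bounded from below in terms of $g(\tau)$ using \textbf{(A5)}. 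This delicate interplay between \textbf{(A4)} and \textbf{(A5)}, together with the precise spectral form of the isometry beyond its crude $\sup_\eta$ majorant, is the technical crux of the argument.
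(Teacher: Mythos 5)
Your overall architecture is indeed the paper's: localize at a scale $\tau$ (the paper's $\eps$), isolate a part that is conditionally Gaussian given $\scrF_{t-\tau}$ with conditional variance at least $\sigma_0^2 g(\tau)\geq C\sigma_0^2\tau^{\gamma}$, bound the remainder in $L^2$, integrate the finite difference by parts against the Gaussian density (Lemma \ref{lem:gradientestimate}), and balance $\tau$ against $|h|$. But two of your steps do not deliver what you claim. First, your drift remainder keeps the integrand $b(u(s,y))$, so your own estimate gives only $\E[(H_\tau^{\mathrm{dr}})^2]\leq Cg_2(\tau)\leq C\tau^{\gamma_2}$, which contradicts the asserted $\E[H_\tau^2]\leq C\tau^{\min\{\gamma_1,\gamma_2\}+\delta}$ and would weaken \eqref{eq:condition} to $\min\{\gamma_1+\delta,\gamma_2\}>\gamma$. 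The paper freezes the drift coefficient as well: the approximation \eqref{eq:addueps} contains the term $b(u(t-\eps,0))\int_{t-\eps}^t\int_\Rd\Lambda(t-s,-y)\,dy\,ds$, so the drift remainder has integrand $b(u(s,y))-b(u(t-\eps,0))$ and gains the extra factor $\eps^{\delta}$ from \textbf{(A4)}, exactly as the stochastic part does. Second, your concluding bound $|\E[\Delta_h^k\phi(u(t,x))]|\leq C|h|^{ks}\|\phi\|_\infty$ cannot hold with only $\|\phi\|_\infty$ on the right (the remainder term produces no smallness unless the $\caC^\alpha_b$ seminorm of $\phi$ is used), and Lemma \ref{lem:existencedensity} returns the Besov index $a-\alpha$, not the $|h|$-exponent $a$; one must still optimize over $\alpha$ and the scaling exponent (the paper takes $\eps=\tfrac{t}{2}|h|^{\rho/\gamma}$, gets $a=\alpha\rho\bar\gamma/2$, and maximizes $a-\alpha$ at $\alpha=\bar\gamma^{-1}$, $\rho=2$) to reach $s<1-\bar\gamma^{-1}$.

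The more serious gap is your proposed resolution of what you rightly call the technical crux. You suggest absorbing the $\scrF_{t-\tau}$-measurable piece $\sigma(u(t-\tau,\cdot))$ into an enlarged conditionally Gaussian integral $\int_{t-\tau}^t\int_\Rd\Lambda(t-s,x-y)\sigma(u(t-\tau,y))M(ds,dy)$. That integral is indeed conditionally Gaussian, but its conditional variance is a quadratic form in which the distribution $\Lambda$ is paired with the spatially varying weight $\sigma(u(t-\tau,\cdot))$ and with the spatial covariance of the noise. Extracting the lower bound $\sigma_0^2 g(\tau)$ from \textbf{(A5)} then requires pulling the weight out pointwise, which needs $\Lambda$ (and the covariance) to be nonnegative; this fails precisely for the wave equation in dimension $d\geq 4$, the case the theorem is designed to cover, and it is the very obstruction that blocks the Malliavin-calculus route for non-constant $\sigma$, as the introduction explains. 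The paper's way out is different and simpler: it freezes $\sigma$ at the single value $\sigma(u(t-\eps,0))$, so that the conditional variance factorizes exactly as $\sigma(u(t-\eps,0))^2\int_0^\eps\int_\Rd|\caF\Lambda(s)(\xi)|^2\mu(d\xi)\,ds\geq\sigma_0^2 g(\eps)$ with no positivity of $\Lambda$ required, and the entire difference $\sigma(u(s,y))-\sigma(u(t-\eps,0))$ is pushed into the remainder and estimated through \eqref{eq:isometry1}, with the second-moment factor evaluated at the origin and controlled by \textbf{(A4)} (Lemma \ref{lem:differenceuueps}). Without this device, or a genuine substitute for it, your argument does not close in the stated generality.
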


The spaces $B_{1,\infty}^s$, $s>0$,  can be defined as follows.
Let $f:\Rd\to\R$. For $x,h\in\Rd$ set $(\Delta^1_hf)(x)=f(x+h)-f(x)$. Then, for any $n\in\N$, $n\geq 2$, let
\[ (\Delta_h^nf)(x) = \big(\Delta^1_h(\Delta^{n-1}_hf)\big)(x) = \sum_{j=0}^n (-1)^{n-j}\binom{n}{j}f(x+jh). \]
For any $0<s<n$, we define the norm
\[ \|f\|_{B^s_{1,\infty}} = \|f\|_{L^1} + \sup_{|h|\leq 1} |h|^{-s}\|\Delta_h^nf\|_{L^1}. \]
It can be proved that for two distinct $n,n'>s$ the norms obtained using $n$ or $n'$ are equivalent. 
Then we define $B^s_{1,\infty}$ to be the set of $L^1$-functions with $\|f\|_{B^s_{1,\infty}}<\infty$.
We refer the reader to \cite{triebel} for more details.

The proof of Theorem \ref{thm:existencedensity} is based on the following lemma from \cite{debusschefournier} (based on \cite{debusscheromito}). In the following, we denote by $\caC^\alpha_b$ the set of bounded H\"older continuous functions of degree $\alpha$.

\begin{lemma}\label{lem:existencedensity}
Let $\kappa$ be a finite nonnegative measure. Assume that there exist $0<\alpha\leq a<1$, $n\in\N$ and a constant $C_n$ such that for all $\phi\in\caC^\alpha_b$, and all $h\in\R$ with $|h|\leq1$,
\begin{equation}
	\bigg|\int_\R \Delta_h^n\phi(y)\kappa(d y)\bigg|\leq C_n\|\phi\|_{\caC^\alpha_b}|h|^a.
	\label{eq:existencedensity}
\end{equation}
Then $\kappa$ has a density with respect to the Lebesgue measure, and this density belongs to the Besov space $B^{a-\alpha}_{1,\infty}(\R)$.
\end{lemma}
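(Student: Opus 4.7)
The plan is to exploit the hypothesis \eqref{eq:existencedensity} through a Littlewood--Paley decomposition, estimating each dyadic block of $\kappa$ in $L^1$ with the rate $2^{-j(a-\alpha)}$, and then summing. The point is that the Littlewood--Paley multiplier at frequency $\sim 2^j$ can be represented as an $n$-th finite difference of a fixed Schwartz profile at the dual spatial scale $h_j = 2^{-j}$, which is exactly what makes the assumption directly applicable.

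Concretely, I would fix a standard resolution of unity $1 = \chi(\xi) + \sum_{j\geq 0}\phi_j(\xi)$ with $\phi_0\in\mathcal{C}_0^\infty(\R)$ even and supported in $\{2^{-1}\leq |\xi|\leq 2\}$, and $\phi_j(\xi)=\phi_0(2^{-j}\xi)$. On $\mathrm{supp}\,\phi_j$ the quantity $|e^{ih_j\xi}-1|$ is bounded away from zero, so $q_j(\xi):=\phi_j(\xi)/(e^{ih_j\xi}-1)^n$ is smooth and compactly supported, of the scaled form $q_j(\xi) = q(2^{-j}\xi)$ for a fixed profile $q$. Setting $\tilde\phi_j := \caF^{-1}q_j = 2^j\,\tilde\phi(2^j\,\cdot)$ with $\tilde\phi\in\caS(\R)$, one gets by construction the identity $\Delta^n_{h_j}\tilde\phi_j = \check\phi_j$, together with the uniform bounds $\|\tilde\phi_j\|_{L^1}\leq C$ and $\|\tilde\phi_j'\|_{L^1}\leq C\,2^j$; interpolation then yields the Bernstein-type estimate $\|\tilde\phi_j\ast g\|_{\caC^\alpha_b}\leq C\,2^{j\alpha}\|g\|_\infty$ for every $g\in L^\infty(\R)$.

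The key block estimate now follows by duality. For $g\in L^\infty(\R)$ with $\|g\|_\infty\leq 1$, using evenness of $\check\phi_j$, the identity $\check\phi_j\ast g = \Delta^n_{h_j}(\tilde\phi_j\ast g)$ and \eqref{eq:existencedensity} applied with $\phi=\tilde\phi_j\ast g\in\caC^\alpha_b$,
\[
\Bigl|\int_\R g(x)\,(\check\phi_j\ast\kappa)(x)\,dx\Bigr|
= \Bigl|\int_\R \Delta^n_{h_j}(\tilde\phi_j\ast g)(y)\,\kappa(dy)\Bigr|
\leq C_n\,\|\tilde\phi_j\ast g\|_{\caC^\alpha_b}\,h_j^a
\leq C\,2^{-j(a-\alpha)}.
\]
Taking the supremum over $g$ gives $\|\check\phi_j\ast\kappa\|_{L^1}\leq C\,2^{-j(a-\alpha)}$, while the low-frequency term satisfies $\|\check\chi\ast\kappa\|_{L^1}\leq \|\check\chi\|_{L^1}\,\kappa(\R)<\infty$ by Young's inequality.

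Since $a>\alpha$ makes $\sum_j 2^{-j(a-\alpha)}$ convergent, the partial sums $\check\chi\ast\kappa+\sum_{j=0}^{J}\check\phi_j\ast\kappa$ are Cauchy in $L^1(\R)$; their limit is a nonnegative $L^1$ function $f$ which coincides with $\kappa$ distributionally (by the LP resolution of unity), so $\kappa = f\,dx$ is absolutely continuous. The block bounds encode $\sup_{j\geq 0}2^{j(a-\alpha)}\|\check\phi_j\ast f\|_{L^1}<\infty$, which by the classical equivalence between the Littlewood--Paley and finite-difference characterizations of Besov spaces (see \cite{triebel}) is exactly $f\in B^{a-\alpha}_{1,\infty}(\R)$. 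The only step with genuine technical content is the construction of $\tilde\phi_j$ together with its Bernstein-type H\"older bound; once that template is in place the rest assembles mechanically.
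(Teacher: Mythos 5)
Your argument is essentially correct, but note first that the paper itself gives no proof of Lemma \ref{lem:existencedensity}: it is imported from \cite{debusschefournier} (based on \cite{debusscheromito}), so the comparison is with that source. Your Littlewood--Paley route is sound: on $\mathrm{supp}\,\phi_j$ one has $|h_j\xi|\in[1/2,2]$, which avoids $2\pi\Z$, so $q_j(\xi)=\phi_j(\xi)/(e^{\ii h_j\xi}-1)^n$ is a legitimate rescaled smooth profile; since $\phi_0$ is real and even, $q_j(-\xi)=\overline{q_j(\xi)}$, hence $\tilde\phi_j$ is real-valued and $\tilde\phi_j\ast g$ is an admissible test function in \eqref{eq:existencedensity}; the Bernstein bound follows from $\|\tilde\phi_j\|_{L^1}\leq C$, $\|\tilde\phi_j'\|_{L^1}\leq C2^j$ and $|f(x)-f(y)|\leq (2\|f\|_\infty)^{1-\alpha}\|f'\|_\infty^{\alpha}|x-y|^\alpha$; and the duality step is correct. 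Two small caveats: (i) the convergence of $\sum_j 2^{-j(a-\alpha)}$ requires $a>\alpha$ strictly, whereas the statement allows $a=\alpha$ --- in that borderline case the conclusion actually fails (a Dirac mass satisfies \eqref{eq:existencedensity} with $a=\alpha$), so your implicit restriction is the right one and is the only case the paper uses; (ii) you should justify in one line that the $L^1$-limit $f$ of the partial sums $S_J=\Psi_J\ast\kappa$ equals $\kappa$ in $\caS'$, via $\langle S_J,\psi\rangle=\langle\kappa,\Psi_J\ast\psi\rangle\to\langle\kappa,\psi\rangle$.

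For comparison, the proof in \cite{debusschefournier} mollifies instead of localizing in frequency: one sets $\kappa_\delta=\kappa\ast\gamma_\delta$ with $\gamma_\delta$ a Gaussian kernel, bounds $\|\Delta^n_h\kappa_\delta\|_{L^1}$ by the same duality trick applied to $\gamma_\delta\ast g$ (whose $\caC^\alpha_b$-norm is of order $\delta^{-\alpha/2}\|g\|_\infty$), optimizes $\delta$ against $|h|$, and concludes by weak-$\ast$ sequential compactness of bounded sets of $B^{a-\alpha}_{1,\infty}$. Both are duality arguments resting on the same heuristic (test \eqref{eq:existencedensity} against a smoothed copy of a bounded function, paying $\alpha$ derivatives at the smoothing scale); your version replaces the Gaussian at scale $\delta$ by a dyadic block at scale $2^{-j}$, which makes membership in $B^{a-\alpha}_{1,\infty}$ immediate from the Littlewood--Paley characterization and avoids the compactness step, at the price of invoking the equivalence of the Littlewood--Paley and finite-difference norms from \cite{triebel}.
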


We will apply this lemma to $\kappa= \P\circ u(t,x)^{-1}$, where $u$ is the solution to the SPDE \eqref{eq:SPDE2} at some fixed point $(t,x)\in(0,T]\times\Rd$. We prove the claim in Theorem \ref{thm:existencedensity} for $x=0$. Since the probability distribution of $u(t,x)$ does not depend on $x\in\Rd$, this yields the result.

Note that in this application the constant $C_n$ in \eqref{eq:existencedensity} may depend on $n$, on the elements defining the SPDE \eqref{eq:SPDE2} and on the assumptions. In particular, on $\sigma,\sigma_0,b,\beta,T,d,\gamma,\gamma_1,\gamma_2$ or the total mass $|\mu|$ if the measure $\mu$ is finite, and also on $t\in(0,T]$. 

In order to apply Lemma \ref{lem:existencedensity}, we rely on the three next lemmas.

\begin{lemma}\label{lem:gradientestimate}
The density $\varphi$ of a one-dimensional normal distribution $\caN(0,\sigma^2)$ satisfies
\[ \big\| \varphi^{(n)}\big\|_{L^1} = C_n\big(\sigma^2\big)^{-n/2}, \]
for all $n\in\N$, where $\varphi^{(n)}(y)= \frac{d^n\varphi}{dy^n}(y)$, 
$y\in\R$.
\begin{proof}
Let $H_n$ denote the $n$-th Hermite polynomial. It is well-known that
\[ \varphi^{(n)}(y) = n!\bigg(\frac{-1}{(2\sigma^2)^{1/2}}\bigg)^n H_n\bigg(\frac{y}{(2\sigma^2)^{1/2}}\bigg)\frac{1}{(2\sigma^2)^{1/2}}\exp\bigg(-\frac{y^2}{2\sigma^2}\bigg), \]
With a change of variables we obtain
\[ \|\varphi^{(n)}\|_{L^1} = \frac{n!}{(2\sigma^2)^{n/2}}\int_\R \big|H_n(y)\big|\exp(-y^2)d y.\]
Since the last integral is finite, we have the result.
\end{proof}
\end{lemma}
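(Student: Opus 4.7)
The plan is a straightforward scaling reduction to the standard normal case, followed by an appeal to the classical Hermite polynomial representation of derivatives of the Gaussian.

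First, I would write $\varphi(y) = \sigma^{-1} g(y/\sigma)$, where $g(u) = (2\pi)^{-1/2}\exp(-u^2/2)$ is the standard normal density. The chain rule yields $\varphi^{(n)}(y) = \sigma^{-(n+1)}\, g^{(n)}(y/\sigma)$, and changing variables via $u = y/\sigma$ in the integral defining $\|\varphi^{(n)}\|_{L^1}$ absorbs one factor of $\sigma$ through the Jacobian, leaving
\[
\|\varphi^{(n)}\|_{L^1} \;=\; \sigma^{-n}\, \|g^{(n)}\|_{L^1}.
\]
Rewriting $\sigma^{-n} = (\sigma^2)^{-n/2}$ already exhibits the claimed dependence on $\sigma^2$; it then only remains to identify $\|g^{(n)}\|_{L^1}$ with a finite constant $C_n$ that depends solely on $n$.

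Second, I would establish finiteness of $\|g^{(n)}\|_{L^1}$ by invoking the Rodrigues-type identity $g^{(n)}(u) = (-1)^n H_n(u)\, g(u)$ (in the probabilist's Hermite polynomial convention), which exhibits $g^{(n)}$ as a polynomial of degree $n$ multiplied by the rapidly decreasing Gaussian density. Integrability in $L^1$ is then immediate from the finiteness of absolute moments of the standard normal distribution. Setting $C_n := \|g^{(n)}\|_{L^1}$ and combining with the first step gives the stated equality.

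There is no substantive obstacle: the argument is pure bookkeeping based on the scaling symmetry of the Gaussian family and the elementary fact that any polynomial times $e^{-u^2/2}$ is integrable on $\R$. The only point meriting a moment's care is the choice of normalization for the Hermite polynomials (probabilist's versus physicist's), which affects the explicit numerical value of $C_n$ but not the stated dependence on $\sigma^2$, and is in any case irrelevant for the downstream Besov-regularity application envisaged in Lemma \ref{lem:existencedensity}.
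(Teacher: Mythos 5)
Your proposal is correct and follows essentially the same route as the paper: both arguments reduce the computation to a change of variables (your scaling step) combined with the Hermite-polynomial representation of the $n$-th derivative of a Gaussian, and conclude by the integrability of a polynomial against $e^{-u^2/2}$. The only difference is cosmetic — you factor the argument into ``scale to the standard normal, then apply Rodrigues,'' whereas the paper writes the Hermite formula directly for variance $\sigma^2$ and then substitutes; the constant $C_n$ and the $(\sigma^2)^{-n/2}$ dependence come out identically.
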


For $0<\eps<t$, define
\begin{align}\label{eq:addueps} 
	u^\eps(t,0) &=  \int_0^{t-\eps}\int_\Rd \Lambda(t-s,-y)\sigma(u(s,y))M(ds,dy)\notag\\
	&+ \int_0^{t-\eps}\int_\Rd \Lambda(t-s,-y)b(u(s,y))dyds \notag\\
			 & + \sigma(u(t-\eps,0))\int_{t-\eps}^t\int_\Rd \Lambda(t-s,-y)M(d s,d y)\notag\\
			 & + b(u(t-\eps,0))\int_{t-\eps}^t\int_\Rd \Lambda(t-s,-y)dyds. 
\end{align}

The following lemma gives a first bound for the expected values of the iterated differences of functions of the solution to the SPDE \eqref{eq:SPDE2}.

\begin{lemma}\label{lem:Deltahn}
	Under the assumptions in Theorem \ref{thm:existencedensity}, we have for every $\alpha\in(0,1)$, $\phi\in\caC^\alpha_b$, $h\in\R$, $t\in[0,T]$ and $0<\eps<t$,
	\begin{equation}\label{eq:Deltahn}
		\big|\E\big[\Delta_h^n\phi(u(t,0))\big]\big| \leq C_n\|\phi\|_{\caC^\alpha_b}\Big(|h|^{n} g(\eps)^{-n/2} + \big(\E\big[|u^\eps(t,0)-u(t,0)|^2\big]\big)^{\alpha/2}\Big),
	\end{equation}
	where $u^\eps$ and $g$ are defined in \eqref{eq:addueps} and \eqref{A8a}, respectively.
\begin{proof}
The left-hand side of \eqref{eq:Deltahn} satisfies
\[ \big|\E\big[\Delta_h^n\phi(u(t,0))\big]\big| \leq I_1(h,n,\phi,\eps,t) + I_2(h,n,\phi,\eps,t), \]
where
\begin{align*}
	I_1(h,n,\phi,\eps,t) & := \big|\E\big[\Delta_h^n\phi(u(t,0)) - \Delta_h^n\phi(u^{\eps}(t,0))\big]\big|, \\
	I_2(h,n,\phi,\eps,t) & := \big|\E\big[\Delta_h^n\phi(u^{\eps}(t,0))\big]\big|.
\end{align*}
For the first term, the property $\|\Delta_h^n\phi\|_{\caC^\alpha_b}\leq C_n\|\phi\|_{\caC^\alpha_b}$, the spatial stationarity of the solution and H\"older's inequality yield
\begin{align}\label{eq:addI_1}
	I_1(h,n,\phi,\eps,t)
	& \leq C_n\|\phi\|_{\caC^\alpha_b}\E\big[|u^\eps(t,0)-u(t,0)|^\alpha\big] \notag \\
	& \leq C_n\|\phi\|_{\caC^\alpha_b}\big(\E\big[|u^\eps(t,0)-u(t,0)|^2\big]\big)^{\alpha/2}.
\end{align}
For the study of the term $I_2(h,n,\phi,\eps,t)$, we consider the decomposition
\begin{equation}\label{eq:addueps2}
  u^\eps(t,0) = U_t^\eps + \sigma(u(t-\eps,0))\int_{t-\eps}^t\int_\Rd \Lambda(t-s,-y)M(d s,d y),
\end{equation}
where
\begin{align*}
	U_t^\eps = 	& \int_0^{t-\eps}\int_\Rd \Lambda(t-s,-y)\sigma(u(s,y))M(ds,dy) 	\\
							& + \int_0^{t-\eps}\int_\Rd \Lambda(t-s,-y)b(u(s,y))dyds					\\
							& + b(u(t-\eps,0))\int_{t-\eps}^t\int_\Rd \Lambda(t-s,-y)dyds.
\end{align*}
Notice that $U^\eps_t$ is  $\scrF_{t-\eps}$-measurable, and conditionally to $\scrF_{t-\eps}$,  
\[ V_t^\eps:=\sigma(u(t-\eps,0))\int_0^t\int_\Rd \Lambda(t-s,-y)M(ds,dy)\]
is a Gaussian random variable with zero mean and independent of $U^\eps_t$. The conditional variance of $V_t^\eps$ is computed as follows:
\begin{align}\label{eq:variancefepst}
	\sigma^2_\Lambda(\eps)
	& := \E\bigg[\bigg(\sigma(u(t-\eps,0))\int_{t-\eps}^t\int_\Rd \Lambda(t-s,-y)M(d s,d y)\bigg)^2\bigg|\scrF_{t-\eps}\bigg] \notag\\
	& = \sigma(u(t-\eps,0))^2\E\bigg[\bigg(\int_{t-\eps}^t\int_\Rd \Lambda(t-s,-y)M(d s,d y)\bigg)^2\bigg] \notag\\
	& = \sigma(u(t-\eps,0))^2\int_0^\eps\int_\Rd |\caF\Lambda(s)(\xi)|^2\mu(d\xi)d s, \notag\\
	& \geq \sigma_0^2 g(\eps). 
\end{align}
where in the last step we have used {\bf (A5)}. Therefore the conditional law of $V_t^\eps$ with respect to $\scrF_{t-\eps}$ has a $\caC^\infty_b$-density, which we denote by $\varphi_{t,\Lambda,\eps}$. 

For any $f\in\caC^m$, we have
\begin{equation}\label{eq:boundDeltahn}
	\|\Delta_h^nf\|_{L^1(\R)} \leq C_n|h|^n\|f^{(n)}\|_{L^1(\R)}.
\end{equation}
Indeed, this inequality holds since 
$\Delta_h^nf(x) = \int_0^n l_n(v) f^{(n)}(x+hv)h^nd v$,
for some bounded function $l_n$ which is independent of $f$.

Therefore, by conditioning with respect to $\scrF_{t-\eps}$, and applying a discrete integration by parts, \eqref{eq:boundDeltahn}, Lemma \ref{lem:gradientestimate} and \eqref{eq:variancefepst}, we obtain
\begin{align*}
  I_2(h,n,\phi,\eps,t) 
  & = \bigg|\E\bigg[\int_\R \Delta_h^n\phi(U_t^\eps + y)\varphi_{t,\Lambda,\eps}(y)d y\bigg]\bigg| \\
  & = \bigg|\E\bigg[\int_\R \phi(U_t^\eps + y)\Delta_{-h}^n\varphi_{t,\Lambda,\eps}(y)d y\bigg]\bigg| \\
  & \leq \|\phi\|_\infty \int_\R\big|\Delta_{-h}^n\varphi_{t,\Lambda,\eps}(y)\big|d y \\
  & \leq C_n\|\phi\|_\infty |h|^n \|\varphi^{(n)}_{t,\Lambda,\eps}\|_{L^1(\R)} \\
  & = C_n\|\phi\|_\infty |h|^n (\sigma^2_\Lambda(\eps))^{-n/2} \\
  & \leq C_{n,\sigma_0}\|\phi\|_{\infty} |h|^n g(\eps)^{-n/2},
\end{align*}
which together with \eqref{eq:addI_1} yields \eqref{eq:Deltahn}, since $\|\phi\|_{\infty}\leq \|\phi\|_{\caC^\alpha_b}$.
\end{proof}	
\end{lemma}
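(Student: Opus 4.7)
The plan is to use the regularized field $u^\eps(t,0)$ to isolate the noise over the final slice $[t-\eps,t]$ as an explicit conditional Gaussian, whose smoothing will absorb the $n$ finite differences. I split
\[ \E[\Delta_h^n\phi(u(t,0))] = \underbrace{\E[\Delta_h^n\phi(u(t,0)) - \Delta_h^n\phi(u^\eps(t,0))]}_{I_1} + \underbrace{\E[\Delta_h^n\phi(u^\eps(t,0))]}_{I_2}, \]
and handle the two summands by very different arguments.

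For $I_1$, the expansion $\Delta_h^n\phi(x) = \sum_{j=0}^n (-1)^{n-j}\binom{n}{j}\phi(x+jh)$ and the triangle inequality show that $\Delta_h^n\phi$ is again $\alpha$-Hölder with Hölder norm at most $C_n\|\phi\|_{\caC^\alpha_b}$. A direct Hölder bound followed by Jensen's inequality (to pass from $L^\alpha$ to $L^2$, using $\alpha<1<2$) then produces $|I_1| \leq C_n\|\phi\|_{\caC^\alpha_b}(\E[|u^\eps(t,0)-u(t,0)|^2])^{\alpha/2}$, i.e.\ the second term on the right of \eqref{eq:Deltahn}.

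The substance of the proof lies in $I_2$. Of the four summands of $u^\eps(t,0)$ in \eqref{eq:addueps}, only the third is fresh martingale noise over $[t-\eps,t]$, and it appears with the coefficient $\sigma(u(t-\eps,0))$ frozen at time $t-\eps$. I would therefore decompose $u^\eps(t,0) = U_t^\eps + V_t^\eps$, where $U_t^\eps$ collects the other three summands and is $\scrF_{t-\eps}$-measurable, and $V_t^\eps := \sigma(u(t-\eps,0))\int_{t-\eps}^t\int_\Rd \Lambda(t-s,-y)M(ds,dy)$. Conditional on $\scrF_{t-\eps}$, Walsh's isometry identifies $V_t^\eps$ as centred Gaussian with variance $\sigma(u(t-\eps,0))^2\int_0^\eps\int_\Rd|\caF\Lambda(s)(\xi)|^2\mu(d\xi)\,ds$, and assumption {\bf (A5)} combined with \eqref{A8a} bounds this below by $\sigma_0^2 g(\eps)$. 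The conditional density $\varphi_{t,\Lambda,\eps}$ of $V_t^\eps$ is thus smooth and non-degenerate. Conditioning on $\scrF_{t-\eps}$ and then performing a discrete integration by parts --- the change of variables $y\mapsto y+jh$ inside each summand of $\Delta_h^n\phi(U_t^\eps+y)$ transfers the finite differences onto $\varphi_{t,\Lambda,\eps}$, producing $\int_\R \phi(U_t^\eps+y)\Delta_{-h}^n\varphi_{t,\Lambda,\eps}(y)\,dy$ --- followed by the elementary inequality $\|\Delta_h^n f\|_{L^1}\leq C_n|h|^n\|f^{(n)}\|_{L^1}$ (obtained by writing $\Delta_h^n f$ as an iterated integral of $f^{(n)}$ against a bounded kernel) and Lemma \ref{lem:gradientestimate} applied to the Gaussian density, gives $|I_2|\leq C_n\|\phi\|_\infty |h|^n(\sigma_0^2 g(\eps))^{-n/2}$. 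Bounding $\|\phi\|_\infty\leq\|\phi\|_{\caC^\alpha_b}$ yields the first term of \eqref{eq:Deltahn}.

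The main obstacle is precisely the choice of decomposition \eqref{eq:addueps}: freezing $\sigma$ and $b$ at time $t-\eps$ is what makes $V_t^\eps$ conditionally Gaussian and $U_t^\eps$ genuinely $\scrF_{t-\eps}$-measurable, so that the conditioning-and-integration-by-parts step is legitimate and the non-degeneracy lower bound $\sigma_0^2 g(\eps)$ from {\bf (A5)} and \eqref{A8a} is available.
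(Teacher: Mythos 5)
Your proposal is correct and follows essentially the same route as the paper's own proof: the same splitting into $I_1$ and $I_2$, the same decomposition $u^\eps(t,0)=U_t^\eps+V_t^\eps$ with coefficients frozen at $t-\eps$, the conditional Gaussian argument with the lower bound $\sigma_0^2 g(\eps)$ from {\bf (A5)}, and the discrete integration by parts combined with $\|\Delta_h^n f\|_{L^1}\leq C_n|h|^n\|f^{(n)}\|_{L^1}$ and Lemma \ref{lem:gradientestimate}. No gaps to report.
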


\begin{lemma}\label{lem:differenceuueps}
Under the assumptions in Theorem \ref{thm:existencedensity} we have for all $t\in[0,T]$ and all $\eps\in(0,t)$,
\begin{equation}
	\E\big[(u(t,0)-u^\eps(t,0))^2\big] \leq C\eps^{\delta}\big(g_1(\eps)+g_2(\eps)\big),
	\label{eq:difference2}
\end{equation}
with $\delta$ as in {\bf (A4)}, and $g_1$, $g_2$ are defined in \eqref{A8b}, \eqref{A8c}, respectively.
\begin{proof}
Using \eqref{eq:isometry1}, \eqref{eq:isometry2} and the Lipschitz continuity of $\sigma$ and $b$, we have
\begin{align*}
  & \E\big[(u(t,0)-u^\eps(t,0))^2\big] \\
  & \leq 2\E\bigg[\bigg(\int_{t-\eps}^t\int_\Rd \Lambda(t-s,-y)\big(\sigma(u(s,y))-\sigma(u(t-\eps,0))\big)M(d s,d y)\bigg)^2\bigg]  \\
  & \phantom{\leq} + 2\E\bigg[\bigg(\int_{t-\eps}^t\int_\Rd \Lambda(t-s,-y)\big(b(u(s,y))-b(u(t-\eps,0))\big)d yd s\bigg)^2\bigg] \\
  & \leq \int_{t-\eps}^t \E\big[|\sigma(u(s,0))-\sigma(u(t-\eps,0))|^2\big] \sup_{\eta\in\Rd}\int_\Rd |\caF\Lambda(t-s)(\xi+\eta)|^2\mu(d\xi)ds \\
  & \phantom{\leq} + \int_{t-\eps}^t \E\big[|b(u(s,0))-b(u(t-\eps,0))|^2\big] \sup_{\eta\in\Rd} |\caF\Lambda(t-s)(\eta)|^2 ds \\
  & \leq C\sup_{s\in[t-\eps,t]} \E\big[(u(s,0)-u(t-\eps,0))^2\big]\big(g_1(\eps)+g_2(\eps)\big).
\end{align*}
Together with {\bf (A4)}, this implies \eqref{eq:difference2}. 
\end{proof}
\end{lemma}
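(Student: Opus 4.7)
The plan is to subtract \eqref{eq:addueps} from the mild formulation \eqref{eq:SPDE2} taken at $x=0$: the integrals over $[0,t-\eps]\times\Rd$ cancel exactly, and what remains is
\begin{align*}
u(t,0) - u^\eps(t,0) &= \int_{t-\eps}^t\!\!\int_\Rd \Lambda(t-s,-y)\bigl[\sigma(u(s,y))-\sigma(u(t-\eps,0))\bigr] M(ds,dy) \\
&\quad + \int_{t-\eps}^t\!\!\int_\Rd \Lambda(t-s,-y)\bigl[b(u(s,y))-b(u(t-\eps,0))\bigr] dy\,ds.
\end{align*}
The elementary inequality $(a+b)^2 \le 2a^2 + 2b^2$ reduces the estimate to bounding the $L^2$-norm of each summand separately.

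For the stochastic summand I would invoke the Walsh-type second-moment bound from \cite{conusdalang} (the same estimate that yields \eqref{eq:isometry1}) applied to the integrand $\Lambda(t-s,-y)[\sigma(u(s,y)) - \sigma(u(t-\eps,0))]$. Combined with spatial stationarity of the random field solution, which removes the residual $y$-dependence in the resulting expectation, this produces
\[ \int_{t-\eps}^t \E\bigl[|\sigma(u(s,0)) - \sigma(u(t-\eps,0))|^2\bigr]\, \sup_{\eta\in\Rd}\int_\Rd|\caF\Lambda(t-s)(\xi+\eta)|^2\mu(d\xi)\,ds. \]
The deterministic summand is treated identically via \eqref{eq:isometry2}, producing the analogous expression with $b$ in place of $\sigma$ and $\sup_{\eta\in\Rd}|\caF\Lambda(t-s)(\eta)|^2$ in place of the spectral sup.

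To conclude, apply the Lipschitz hypothesis on $\sigma$ and $b$ to replace both inner expectations by $\E[|u(s,0)-u(t-\eps,0)|^2]$, and invoke assumption \textbf{(A4)} to bound this by $C|s-(t-\eps)|^\delta \le C\eps^\delta$ uniformly in $s\in[t-\eps,t]$. Pulling this $\eps^\delta$ factor outside the integral and changing variables $r = t-s$ identifies the remaining integrals with $g_1(\eps)$ and $g_2(\eps)$ from \eqref{A8b}–\eqref{A8c}, which establishes \eqref{eq:difference2}.

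The main technical subtlety lies in the first reduction: the estimates \eqref{eq:isometry1} and \eqref{eq:isometry2} as displayed apply to integrands of the form $\sigma(u(s,y))$ or $b(u(s,y))$, while ours also carry the frozen value $\sigma(u(t-\eps,0))$ (respectively for $b$). One has to fall back on the slightly more general bound available from \cite{conusdalang}, use spatial stationarity to kill the $y$-dependence, and only then match the expression to the moment of a temporal increment controlled by \textbf{(A4)}.
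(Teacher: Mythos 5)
Your proposal is correct and follows essentially the same route as the paper: the same cancellation over $[0,t-\eps]$, the bound $(a+b)^2\le 2a^2+2b^2$, the second-moment estimates \eqref{eq:isometry1}--\eqref{eq:isometry2} applied to the difference integrands, and then Lipschitz continuity plus \textbf{(A4)} to produce the factor $\eps^\delta(g_1(\eps)+g_2(\eps))$. The technical subtlety you flag about the frozen term $\sigma(u(t-\eps,0))$ is handled in the paper exactly as you suggest, by invoking the general moment bound from \cite{conusdalang} together with stationarity.
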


We are now in a position to show Theorem \ref{thm:existencedensity}. 

\begin{proof}[Proof of Theorem \ref{thm:existencedensity}]
Fix $t\in(0,T]$, $x=0$, and let $\kappa = \P\circ u(t,0)^{-1}$. For all $h\in\R$ such that $|h|\leq1$ and all $\phi\in\caC^\alpha_b$ with $\alpha\in(0,1)$, we set
\[ I_{t,h} = \int_\R \Delta_h^n \phi(y)\kappa(d y) = \E\big[\Delta_h^n \phi(u(t,0))\big]. \]
Applying Lemmas \ref{lem:Deltahn} and \ref{lem:differenceuueps}, and {\bf (A6)}, we get 
\begin{align}
  \vert I_{t,h}\vert &\leq C_n\|\phi\|_{\caC^\alpha_b}\Big(|h|^n g(\eps)^{-n/2} + \big(\eps^{\delta}(g_1(\eps)+g_2(\eps))\big)^{\alpha/2}\Big)\nonumber\\
 &\leq C_{n}\|\phi\|_{\caC^\alpha_b}\Big(|h|^n \eps^{-\frac{\gamma n}{2}} + \eps^{\frac{\alpha(\gamma_1+\delta)}{2}} + \eps^{\frac{\alpha(\gamma_2+\delta)}{2}}\Big).\label{eq:addmaster2}
\end{align}
In the last inequality, we have used that  
$(x+y)^{\alpha/2}\leq 2^{\frac{\alpha}{2}-1}\big(x^{\alpha/2}+y^{\alpha/2}\big) \leq x^{\alpha/2}+y^{\alpha/2}$. Hence, the constant $C_n$ does not depend on $\alpha$.

Set $\eps=\tfrac{t}{2}|h|^{\frac{\rho}{\gamma}}$ with $\rho\in(0,2)$ to be selected later. Notice that for all $h\in[-1,1]$ we have $0<\eps<t$.
For this choice of $\eps$ and $n$ sufficiently large, 
\begin{equation*}
	\vert I_{t,h}\vert \leq C_{n,t}\|\phi\|_{\caC^\alpha_b}\Big(|h|^{\frac{\alpha\rho}{2}\frac{\min(\gamma_1,\gamma_2)+\delta}{\gamma}} \Big).
\end{equation*}
Fix $\rho\in\left(\frac{2}{\bar\gamma}, 2\right)$. Since $\bar\gamma>1$, one can choose $\alpha\in(0,1)$ satisfying $\frac{\alpha\rho \bar\gamma}{2}<1$.
Summarizing, we have proved that, for $n$ sufficiently large, there exists $\alpha\in(0,1)$ and $\rho\in\left(\frac{2}{\bar\gamma}, 2\right)$ satisfying
\begin{equation*}
\vert I_{t,h}\vert \leq C_{n,t}\|\phi\|_{\caC^\alpha_b}\vert h\vert^{\frac{\alpha\rho\bar\gamma}{2}},
\end{equation*}
and $0<\alpha<\frac{\alpha\rho\bar\gamma}{2}<1$.
Therefore, from Lemma \ref{lem:existencedensity} it follows that $\P\circ u(t,x)^{-1}$ has a density $g_{t,x}$ with respect to the Lebesgue measure, and $g_{t,x}\in B_{1,\infty}^{\frac{\alpha\rho\bar\gamma}{2}-\alpha}$.

We end the proof by determining the best degree of the Besov space. For this, we have to find $\max_{\alpha, \rho} \alpha\left(\frac{\rho\bar\gamma}{2}-1\right)$ with the restrictions $\alpha\in(0,1)$, $\rho\in(0,2)$ and $\frac{\alpha\rho\bar\gamma}{2}<1$. Using Lagrange's method we can prove that the unique optimal parameters for $\alpha$ and $\rho$ are $\bar\gamma^{-1}$ and $2$, respectively. 
Thus,
$g_{t,x}\in B_{1,\infty}^{s}$, with $s\in (0,1-\bar\gamma^{-1})$. This finishes the proof of the Theorem.

\end{proof}


\section{Examples}
\label{sec:example}
In this section we consider mainly the stochastic wave equation in any spatial dimension $d\ge 1$, which is studied in \cite[Section 4]{conusdalang}. In the last part, we will give some remarks on the heat equation (see Remark \ref{rem:heat}) which complement known results.

Let us consider \eqref{eq:SPDE2} where  $\Lambda$ is the fundamental solution to the wave equation, whose Fourier transform is 
\begin{equation}\label{eq:Lambdawaveeqaution}
	\caF\Lambda(t)(\xi) = \frac{\sin(t|\xi|)}{|\xi|}.
\end{equation}
We will assume that the spatial covariance of the noise $F$ is a Riesz kernel of parameter $\beta\in(0,2\wedge d)$ and therefore, that
the spectral measure $\mu$ is 
\begin{equation}
\label{eq:riesz2}
	\mu(d\xi) = \vert\xi\vert^{-d+\beta} d\xi, \ \xi\in\R^d.
\end{equation}
This kernel is fairly common in the literature on SPDEs with spatially homogeneous covariance. For example, it is a particular case of those considered  in \cite{dalang--sanz-sole09}. 

\begin{theorem}\label{thm:existencedensitywave}
Consider the SPDE \eqref{eq:SPDE2} where $\Lambda$ is the fundamental solution to the wave equation and the spectral measure of the noise $F$ is given by \eqref{eq:riesz2}, with $\beta\in[0,2\wedge d)$. Suppose that {\bf (A5)} is satisfied and that the coefficients $\sigma$ and $b$ are Lipschitz continuous. Fix $(t,x)\in(0,T]\times\Rd$. Then, the probability law of $u(t,x)$ is absolutely continuous with respect to the Lebesgue measure on $\R$ and its density belongs to all Besov spaces $B_{1,\infty}^s$ with $s\in\left(0,(2-\beta)/(5-2\beta)\right)$.
\end{theorem}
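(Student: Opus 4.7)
The plan is to reduce the claim to Theorem \ref{thm:existencedensity}: I will identify, for the wave kernel \eqref{eq:Lambdawaveeqaution} with Riesz-type spectral measure \eqref{eq:riesz2}, the four exponents $\gamma,\gamma_1,\gamma_2,\delta$ appearing in \textbf{(A4)} and \textbf{(A6)}, verify the structural hypotheses, and then read off the Besov index from \eqref{eq:conclusion}. Assumption \textbf{(A1)} together with either \textbf{(A2)} or \textbf{(A3)} is well known in this setting and is verified in \cite[Section 4]{conusdalang}; \textbf{(A5)} is part of the hypothesis.

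To handle \textbf{(A6)}, I would use the scaling $\zeta=s\xi$ in \eqref{eq:Lambdawaveeqaution}. A direct computation yields
\[
  g(t) = \int_0^t\!\int_{\R^d}\frac{\sin^2(s|\xi|)}{|\xi|^2}|\xi|^{-d+\beta}\,d\xi\,ds = c_{d,\beta}\,t^{3-\beta},
\]
so $\gamma=3-\beta$. The same change of variables, applied after the shift $\xi\mapsto\xi+\eta$, reduces the $g_1$ bound to showing $\sup_{\eta'\in\R^d}\int_{\R^d}\frac{\sin^2(|\zeta|)}{|\zeta|^2}|\zeta-\eta'|^{-d+\beta}\,d\zeta<\infty$, which can be handled by splitting a unit ball around $\eta'$ from its complement and using $\sin^2(|\zeta|)/|\zeta|^2\leq 1\wedge|\zeta|^{-2}$ together with the standard Riesz-composition identity; this gives $\gamma_1=3-\beta$. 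For $g_2$, observe that $\sup_\eta \sin^2(s|\eta|)/|\eta|^2 = s^2$ (attained in the limit $|\eta|\downarrow 0$), so $g_2(t)=t^3/3$ and $\gamma_2=3$.

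The temporal regularity in \textbf{(A4)} I would obtain by splitting $u(t,0)-u(s,0)$ into a ``common'' integral over $[0,s]$ with kernel $\Lambda(t-r,\cdot)-\Lambda(s-r,\cdot)$ and a ``new'' one over $[s,t]$. Applying \eqref{eq:isometry1}--\eqref{eq:isometry2} and the pointwise bound $|\sin(a)-\sin(b)|^2\leq\min(|a-b|^2,4)$ on the Fourier side, the common part is controlled by $C|t-s|^{2-\beta}$ after the same $\zeta=s\xi$ scaling, while the new part is dominated by $g_1(t-s)+g_2(t-s)\lesssim |t-s|^{3-\beta}$, of lower order for small $|t-s|$. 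Hence $\delta=2-\beta$.

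Combining, $\bar\gamma = \frac{\min(\gamma_1,\gamma_2)+\delta}{\gamma} = \frac{(3-\beta)+(2-\beta)}{3-\beta}=\frac{5-2\beta}{3-\beta}>1$ for $\beta<2$, and Theorem \ref{thm:existencedensity} then delivers a density in $B_{1,\infty}^{s}$ for every $s<1-\bar\gamma^{-1}=(2-\beta)/(5-2\beta)$, which is exactly the asserted range. The main technical point I anticipate is obtaining the sharp exponent $\delta=2-\beta$ in \textbf{(A4)}: any smaller value would shrink $\bar\gamma$ and thus the Besov range, so the careful $L^2$-in-time increment estimate based on the Fourier identity for $\caF\Lambda(t)-\caF\Lambda(s)$ is what ultimately powers the stated result.
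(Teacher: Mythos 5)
Your proposal is correct and follows essentially the same route as the paper: reduce to Theorem \ref{thm:existencedensity} by verifying \textbf{(A4)} and \textbf{(A6)} with $\delta=2-\beta$, $\gamma=\gamma_1=3-\beta$, $\gamma_2=3$ via the scaling $\zeta=s\xi$ and a pointwise bound on $\sin$-increments on the Fourier side (the paper uses the product identity $\sin x-\sin y=2\sin\frac{x-y}{2}\cos\frac{x+y}{2}$, equivalent to your $\min(|a-b|^2,4)$ bound), then compute $\bar\gamma=(5-2\beta)/(3-\beta)$ and read off $s<1-\bar\gamma^{-1}=(2-\beta)/(5-2\beta)$. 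All the key estimates, including the uniform-in-$\eta$ finiteness of $\int_{\R^d}\sin^2(|\zeta|)|\zeta|^{-2}|\zeta-\eta|^{-d+\beta}\,d\zeta$, match the paper's Lemma \ref{lem:differenceuuepswave}.
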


To prove this theorem it suffices to check that the assumptions of Theorem \ref{thm:existencedensity} hold with $\delta=2-\beta$, $\gamma=\gamma_1=3-\beta$ and $\gamma_2=3$. The conditions {\bf (A1)} and {\bf (A2)} have already been proved in \cite{conusdalang}. The remaining conditions are established in the next result.

\begin{lemma}\label{lem:differenceuuepswave}
The hypotheses are as in Theorem \ref{thm:existencedensitywave}. Then {\bf (A4)}, {\bf (A6)} hold with $\delta=2-\beta$, $\gamma=\gamma_1=3-\beta$ and $\gamma_2=3$. 
\begin{proof}
We basically follow the same method as in \cite[Proposition 4.1]{dalang--sanz-sole09}. Using \eqref{eq:isometry1} and \eqref{eq:isometry2}, we have 
\begin{align*}
	\E\big[(u(t,x)-u(s,x))^2\big] 
	\leq	& \sup_{r\in[0,T]} \E\big[\sigma(u(r,0))^2\big]\big(I_1(s,t)+I_2(s,t)\big) \\
				& + \sup_{r\in[0,T]} \E\big[b(u(r,0))^2\big]\big(I_3(s,t)+I_4(s,t)\big),  
\end{align*}
where
\begin{align*}	
	I_1(s,t) & = \int_0^s \sup_{\eta\in\Rd}\int_\Rd |\caF\Lambda(t-r)(\xi+\eta) - \caF\Lambda(s-r)(\xi+\eta)|^2 \mu(d\xi)d r, \\
	I_2(s,t) & = \int_s^t \sup_{\eta\in\Rd}\int_\Rd |\caF\Lambda(t-r)(\xi+\eta)|^2 \mu(d\xi)d r, \\
	I_3(s,t) & = \int_0^s \sup_{\eta\in\Rd} |\caF\Lambda(t-r)(\eta) - \caF\Lambda(s-r)(\eta)|^2 d r, \\
	I_4(s,t) & = \int_s^t \sup_{\eta\in\Rd} |\caF\Lambda(t-r)(\eta)|^2 d r.
\end{align*}
Using the identity $\sin x - \sin y = 2\sin\tfrac{x-y}{2}\cos\tfrac{x+y}{2}$, and the changes of variable, $\zeta\mapsto \tfrac{t-s}{2}(\xi+\eta)$ and $\xi\mapsto \zeta-\eta$, we get 
\begin{align*}
	& I_1(s,t) \\
	& = \int_0^s \sup_{\eta\in\Rd}\int_\Rd \bigg|\frac{\sin\big((t-r)|\xi+\eta|\big)}{|\xi+\eta|} - \frac{\sin\big((s-r)|\xi+\eta|\big)}{|\xi+\eta|}\bigg|^2|\xi|^{-d+\beta} d\xi dr \\
	& \leq 4\int_0^s \sup_{\eta\in\Rd}\int_\Rd \frac{1}{|\xi+\eta|^2|\xi|^{d-\beta}}\sin^2\bigg(\frac{(t-s)|\xi+\eta|}{2}\bigg) d\xi d r \\
	& = \frac{4s}{2^{d+2-\beta}}(t-s)^{2-\beta} \sup_{\eta\in\Rd}\int_\Rd \frac{\sin^2(|\zeta|)}{|\zeta|^2 |\zeta-2(t-s)^{-1}\eta|^{d-\beta}} d\zeta \\
	& = \frac{s}{2^{d-\beta}}(t-s)^{2-\beta} \sup_{\eta\in\Rd}\int_\Rd \frac{\sin^2(|\zeta|)}{|\zeta|^2|\zeta-\eta|^{d-\beta}} d\zeta \\
	& \leq \frac{T}{2^{2-\beta}} (t-s)^{2-\beta} \sup_{\eta\in\Rd} \int_\Rd \frac{\sin^2(|\xi+\eta|)}{|\xi+\eta|^2|\xi|^{d-\beta}} d\xi \\	
	& = C (t-s)^{2-\beta}.
\end{align*}
The last step hold because the integral is finite. Indeed, as in \cite[Lemma 6.1]{sanzbook} we can show that $\frac{\sin^2(|\xi+\eta|)}{|\xi+\eta|^2} \leq C\frac{1}{1+|\xi+\eta|^2}$. Therefore, 
\[ \sup_{\eta\in\Rd} \int_\Rd \frac{\sin^2(|\xi+\eta|)}{|\xi+\eta|^2|\xi|^{d-\beta}} d\xi \leq C\sup_{\eta\in\Rd}\int_\Rd \frac{1}{(1+|\xi+\eta|^2)|\xi|^{d-\beta}} d\xi, \]
and the integral on the right-hand side is finite (uniformly in $\eta$) if and only if $\beta\in(0,2\wedge d)$. 

For the term $I_2(s,t)$ we argue quite similarly as for $I_1(s,t)$. Using the change of variables $\zeta\mapsto(t-u)(\xi+\eta)$ and $\xi\mapsto \zeta-\eta$, we obtain
\begin{equation}\label{eq:addI2}
	I_2(s,t) = \int_s^t (t-u)^{2-\beta} \sup_{\eta\in\Rd} \int_\Rd \frac{\sin^2(|\xi+\eta|)}{|\xi+\eta|^2 |\xi|^{d-\beta}} d\xi d u = C (t-s)^{3-\beta},  
\end{equation}
 For $I_3(s,t)$ we use the Lipschitz continuity of the $\sin$ function to get
\[ I_3(s,t) \leq \int_0^s (t-s)^2 du \leq T(t-s)^2. \]
Finally, for $I_4(s,t)$ we use the property $|\sin(x)|\leq x$, for all $x\geq 0$, to obtain
\begin{equation}\label{eq:addI4}
	I_4(s,t) \leq \int_s^t \sup_{\eta\in\Rd} (t-u)^2 d u \leq C (t-s)^3.  
\end{equation}
Hence, we have proved that {\bf (A4)} holds with $\delta=2-\beta$.

Finally, we have to check {\bf (A6)}. With the change of variable $\eta = s\xi$, we clearly have
\begin{equation*}
 g(t)= \int_0^t\int_\Rd \frac{\sin^2(s|\xi|)}{|\xi|^{d+2-\beta}}d\xi ds 
    = c t^{3-\beta}\int_\Rd \frac{\sin^2(|\eta|)}{|\eta|^{d+2-\beta}}d\eta.
\end{equation*}
Thus \eqref{A8a} holds with $\gamma=3-\beta$. Notice that $g_1(t)=I_2(0,t)$ and $g_2(t)=I_4(0,t)$. Therefore \eqref{A8b}, \eqref{A8c} hold with $\gamma_1=3-\beta$ and $\gamma_2=3$, respectively.

The proof of the Lemma is complete.
\end{proof}
\end{lemma}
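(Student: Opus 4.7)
My plan is to reduce both assumptions to size estimates for four deterministic integrals involving $\caF\Lambda(t)(\xi)=\sin(t|\xi|)/|\xi|$, estimated via standard trigonometric manipulations and scaling changes of variables, as in \cite{dalang--sanz-sole09}.

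To verify \textbf{(A4)}, I would start from the mild formulation \eqref{eq:SPDE2} and split $u(t,x)-u(s,x)$ into a stochastic-integral piece and a drift piece, each in turn split as an integral over $[0,s]$ involving $\Lambda(t-r)-\Lambda(s-r)$ plus an integral over $[s,t]$ involving only $\Lambda(t-r)$. Applying the second-moment bounds \eqref{eq:isometry1} and \eqref{eq:isometry2}, and using the Lipschitz property of $\sigma,b$ together with the uniform $L^2$-boundedness of $u$, we see that it suffices to control the four quantities
\[
I_1(s,t),\ I_2(s,t),\ I_3(s,t),\ I_4(s,t)
\]
whose definitions are the obvious ones (sup over $\eta$ of spectral integrals against $\mu$ in the stochastic case, sup over $\eta$ of $|\caF\Lambda(t-r)(\eta)|^2$ in the drift case).

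For $I_1$ the key step is the identity $\sin x-\sin y=2\sin\tfrac{x-y}{2}\cos\tfrac{x+y}{2}$, which bounds $|\caF\Lambda(t-r)(\xi+\eta)-\caF\Lambda(s-r)(\xi+\eta)|$ by $2|\sin((t-s)|\xi+\eta|/2)|/|\xi+\eta|$, independently of $r$. Inserting the Riesz spectral density $|\xi|^{-d+\beta}$ and performing the scaling $\zeta=\tfrac{t-s}{2}(\xi+\eta)$ followed by $\xi\mapsto\zeta-\eta$, the prefactor $(t-s)^{2-\beta}$ falls out and the remaining supremum integral is $\sup_{\eta}\int_\Rd \tfrac{\sin^2|\zeta|}{|\zeta|^2|\zeta-\eta|^{d-\beta}}d\zeta$. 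The finiteness of this supremum is the only delicate point: one uses $\sin^2|\zeta|/|\zeta|^2\le C/(1+|\zeta|^2)$ and then splits the $\zeta$-integral on $\{|\zeta|\le 1\}$ and $\{|\zeta|>1\}$, where integrability uniform in $\eta$ holds precisely when $\beta\in(0,2\wedge d)$. The term $I_2$ is treated by the same scaling but on a time interval of length $t-s$, producing the extra factor $(t-s)$ and giving $C(t-s)^{3-\beta}$. For $I_3$ the Lipschitz property of $\sin$ yields $|\caF\Lambda(t-r)(\eta)-\caF\Lambda(s-r)(\eta)|\le t-s$, so $I_3\le T(t-s)^2$; for $I_4$ the inequality $|\sin x|\le x$ gives $|\caF\Lambda(t-r)(\eta)|\le t-r$, hence $I_4\le C(t-s)^3$. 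Since $2-\beta<3-\beta<2<3$ and $t-s\le T$, the dominant contribution as $s\to t$ is $(t-s)^{2-\beta}$, yielding \textbf{(A4)} with $\delta=2-\beta$.

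Assumption \textbf{(A6)} follows by inspection. The identity
\[
g(t)=\int_0^t\int_\Rd\frac{\sin^2(s|\xi|)}{|\xi|^{d+2-\beta}}d\xi\,ds
\]
is evaluated by the change of variable $\eta=s\xi$, giving $g(t)=c\,t^{3-\beta}\int_\Rd\sin^2|\eta|/|\eta|^{d+2-\beta}\,d\eta$; the last integral is finite exactly when $\beta\in(0,2\wedge d)$, using the same near-zero and near-infinity analysis as above. Finally, $g_1(t)=I_2(0,t)$ and $g_2(t)=I_4(0,t)$, so the bounds already established above directly yield $g_1(t)\le C t^{3-\beta}$ and $g_2(t)\le C t^3$, completing \textbf{(A6)} with $\gamma=\gamma_1=3-\beta$ and $\gamma_2=3$. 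I expect the main technical obstacle throughout to be the uniform-in-$\eta$ control of the Riesz-weighted integrals of $\sin^2|\xi+\eta|/|\xi+\eta|^2$, which is where the range $\beta\in(0,2\wedge d)$ enters in an essential way.
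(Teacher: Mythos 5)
Your proposal is correct and follows essentially the same route as the paper: the same decomposition into $I_1,\dots,I_4$ via \eqref{eq:isometry1}--\eqref{eq:isometry2}, the same sine-difference identity and scaling changes of variables, the same uniform-in-$\eta$ integrability argument requiring $\beta\in(0,2\wedge d)$, and the same observations $g_1(t)=I_2(0,t)$, $g_2(t)=I_4(0,t)$ for \textbf{(A6)}. No gaps.
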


\begin{remark}
\label{rem:mufinite}
Assume that the spectral measure $\mu$ is finite. With the same hypotheses as in Theorem \ref{thm:existencedensitywave} we can prove the existence of density for the law of $u(t,x)$, for $(t,x)\in(0,T]\times \R^d$, and that this density belongs to the spaces $B_{1,\infty}^s$,
with $s\in(0,2/5)$. 
\end{remark}
Indeed, referring to the notations in Lemma \ref{lem:differenceuuepswave}, for a finite measure $\mu$ we have $I_1(s,t)\le C I_3(s,t)$
and $I_2(s,t)\le C I_4(s,t)$, $0\le s<t\le T$. Hence, {\bf (A4)} holds with $\delta=2$. 

Also $g_1(t)\le C g_2(t)$, which yields \eqref{A8b}, \eqref{A8c} with $\gamma_1=\gamma_2=3$. Moreover, since 
\begin{equation*}
\int_0^t\frac{\sin^2(s|\xi|)}{|\xi|^{2}}ds \ge C(t\wedge t^3)\frac{1}{1+|\xi|^2},
\end{equation*}
(see e.g. \cite[Lemma 6.1]{sanzbook}), it follows that \eqref{A8a} holds with $\gamma=3$.

\begin{remark}
\label{rem:heat}
Consider the SPDE \eqref{eq:SPDE2} where $\Lambda$ is the fundamental solution to the heat equation with $d\ge 1$. We assume that the spectral measure of the noise $F$ is either given by \eqref{eq:riesz2}, with $\beta\in[0,2\wedge d)$ or finite. Suppose that {\bf (A5)} is satisfied and that the coefficients $\sigma$ and $b$ are Lipschitz continuous. Fix $(t,x)\in(0,T]\times\Rd$. Then, the probability law of $u(t,x)$ is absolutely continuous with respect to the Lebesgue measure on $\R$ and its density belongs to all Besov spaces $B_{1,\infty}^s$ with $s\in\left(0,\frac{1}{2}\right)$.
\end{remark}  

Let $\mu$ be given by \eqref{eq:riesz2}. The case $\mu$ finite is left to the reader. Under the standing assumptions, there exists a random field solution to \eqref{eq:SPDE2} (see \cite{dalang}). In \cite{sanzsarra} it is proved that {\bf (A4)} holds with $\delta=1-\beta/2$. Hence, going through the proof of Theorem \ref{thm:existencedensity} we see that we only need to check hypotheses {\bf (A6)}. 

The Fourier transform of the fundamental solution to the heat equation is given by $\caF\Lambda(t)(\xi) = \exp(-4\pi^2t|\xi|^2)$. Using this expression, we immediately see that $\gamma_2=1$. Moreover  $\gamma=\gamma_1=1-\beta/2$.
 Indeed, with the changes of variables $\xi\mapsto\xi-\eta$ and $\zeta=\sqrt{s}\xi$ we get
	\begin{align*}
		\int_0^t \sup_{\eta\in\Rd}\int_\Rd & |\caF\Lambda(s)(\xi+\eta)|^2\mu(d\xi)ds \\
		& = \int_0^t \sup_{\eta\in\Rd}\int_\Rd \frac{\exp(-8\pi^2|\sqrt{s}\xi|^2)}{|\xi-\eta|^{d-\beta}}d\xi ds \\
		& = \int_0^t s^{-\beta/2} ds  \sup_{\eta\in\Rd}\int_\Rd \frac{\exp(-8\pi^2|\zeta|^2)}{|\zeta-\eta|^{d-\beta}}d\zeta \\
		& = Ct^{1-\beta/2},
	\end{align*}
because the integral can be shown to be finite. Therefore, $\bar\gamma$ in Theorem \ref{thm:existencedensity} is equal to $2$, which implies the claim.

In contrast with \cite{mcms}, with the method of this article, the density for the solution to the heat equation in any spatial dimension $d\ge 1$ is proved under weaker conditions on $\sigma$ and $b$ (no differentiability is required). 
\medskip

\noindent{\bf Acknowledgment.} The authors thank Arnaud Debussche for useful discussions.



\begin{thebibliography}{10}

\bibitem{ballycaramellino}
V.~Bally and L.~Caramellino.
\newblock Convergence and regularity of probability laws by using an interpolation method. 
\newblock{arXiv:1409.3118v1}.

\bibitem{conusdalang}
D.~Conus and R.~C. Dalang.
\newblock The non-linear stochastic wave equation in high dimensions.
\newblock {\em Electronic Journal of Probability}, 13:629--670, 2008.

\bibitem{dalang}
R.~C.~Dalang.
\newblock {Extending Martingale Measure Stochastic Integral with Applications to Spatially Homogeneous SPDEs}.
\newblock {\em Electronic Journal of Probability}, 4:1--29, 1999.

\bibitem{dalang--sanz-sole09}
R.~C.~Dalang and M. Sanz-Sol\'e.
\newblock {H\"older-Sobolev Regularity of the Solution to the Stochastic Wave
  Equation in Dimension Three}.
\newblock {\em Memoirs of the AMS}, 199(931), 2009.

\bibitem{debusschefournier}
A.~Debussche and N.~Fournier.
\newblock {Existence of densities for stable-like driven SDEs with H{\"o}lder
  continuous coefficients}.
\newblock {\em Journal of Functional Analysis}, 264(8):1757--1778, 2013.

\bibitem{debusscheromito}
A.~Debussche and M.~Romito.
\newblock {Existence of densities for the 3D Navier-Stokes equations driven by
  Gaussian noise}.
\newblock {\em Probability Theory and Related Fields}, 158(3-4):575--596, 2014.

\bibitem{fournierprintems}
N.~Fournier and J.~Printems.
\newblock Absolute continuity for some one-dimensional processes.
\newblock {\em Bernoulli}, 16(2):343--360, 2010.

\bibitem{malliavin1}
P.~Malliavin.
\newblock{Stochastic Calculus of Variation and Hypoelliptic Operators}.
\newblock{\em In: Proc. Inter. Symp. on Stoch. Diff. Equations, Kyoto 1976}.
\newblock Wiley, 1978, pp. 195-263.

\bibitem{malliavin2}
P.~Malliavin.
\newblock{Stochastic Analysis}
\newblock{\em Grundlehren der Mathematischen Wissenschaften 313}
\newblock Springer Verlag, Berlin 1997.

\bibitem{mcms}
D.~M{\'a}rquez-Carreras, M.~Mellouk, and M.~Sarr{\`a}.
\newblock On stochastic partial differential equations with spatially
  correlated noise: smoothness of the law.
\newblock {\em Stochastic Processes and Applications}, 93:269--284, 2001.

\bibitem{quersanz1}
L.~Quer-Sardanyons and M.~Sanz-Sol{\'e}.
\newblock {Absolute Continuity of the Law of the Solution to the 3-dimensional
  Stochastic Wave Equation}.
\newblock {\em Journal of Functional Analysis}, 206:1--32, 2004.

\bibitem{quersanz2}
L.~Quer-Sardanyons and M.~Sanz-Sol{\'e}.
\newblock A stochastic wave equation in dimension 3: smoothness of the law.
\newblock {\em Bernoulli}, 10(1):165--186, 2004.

\bibitem{sanzbook}
M.~Sanz-Sol{\'e}.
\newblock {\em {Malliavin Calculus with Applications to Stochastic Partial
  Differential Equations}}.
\newblock EPFL Press, 2005.

\bibitem{sanzsarra}
M.~Sanz-Sol\'e and M.~Sarr\`a.
\newblock H\"older continuity for the stochastic heat equation with spatially
  correlated noise.
\newblock In {\em Seminar on Stochastic Analysis, Random Fields and
  Applications, III (Ascona, 1999)}, volume~52 of {\em Progress in
  Probability}, pages 259--268. Birkh{\"a}user, 2002.

\bibitem{sanzsuess1}
M.~Sanz-Sol{\'e} and A.~S{\"u}\ss.
\newblock {The stochastic wave equation in high dimensions: Malliavin
  differentiability and absolute continuity}.
\newblock {\em Electronic Journal of Probability}, 18, 2013.
\newblock Paper no 64.

\bibitem{triebel}
H.~Triebel.
\newblock {\em {Theory of Function Spaces}}.
\newblock Birkh\"a{}user, 1983.


\end{thebibliography}
\end{document}